\bmdefine{\aaa}{a}
\bmdefine{\bbb}{b}
\bmdefine{\ccc}{c}
\bmdefine{\ddd}{d}
\bmdefine{\sss}{s}
\bmdefine{\uuu}{u}
\bmdefine{\vvv}{v}
\bmdefine{\www}{w}
\bmdefine{\eee}{e}
\bmdefine{\xxx}{x}
\bmdefine{\yyy}{y}
\bmdefine{\zzz}{z}
\bmdefine{\zerovec}{0}
\newcommand{\XXX}{\mathbb{X}}
\newcommand{\CCC}{\mathbb{C}}
\newcommand{\RRR}{\mathbb{R}}
\newcommand{\qed}{\nolinebreak\rule{.3em}{.6em}}
\newcommand{\rank}{\mathrm{rank}}
\newcommand{\mtrank}{\mathrm{mtrank}\,}
\newcommand{\grank}{\mathrm{grank}\,}
\newcommand{\typicalrankR}{{\mathrm{typical\_rank_\RRR}}}
\newcounter{thmno}
\numberwithin{thmno}{section}
\newtheorem{thm}[thmno]{Theorem}
\newtheorem{prop}[thmno]{Proposition}
\newtheorem{notation}[thmno]{Notation}
\def\fl{\text{fl}}
\begin{document}

\title{Upper bound of typical ranks of $m\times n\times ((m-1)n-1)$ tensors over the real number field}
\author{Toshio Sumi, Toshio Sakata and Mitsuhiro Miyazaki}

\maketitle

\begin{abstract}
Let $3\leq m\leq n$.
We study typical ranks of $m\times n\times ((m-1)n-1)$ tensors
over the real number field.
The number $(m-1)n-1$ is a minimal typical rank of $m\times n\times ((m-1)n-1)$ tensors over the real number field.
We show that a typical rank of $m\times n\times ((m-1)n-1)$ tensors
over the real number field is less than or equal to $(m-1)n$ and
in particular, $m\times n\times ((m-1)n-1)$ tensors
over the real number field has two typical ranks $(m-1)n-1, (m-1)n$
if $m\leq \rho(n)$, where $\rho$ is the Hurwitz-Radon function defined as
$\rho(n)=2^b+8c$ for nonnegative integers $a,b,c$ 
such that $n=(2a+1)2^{b+4c}$ and $0\leq b<4$.
\end{abstract}

\section{Introduction}

Kolda and Bader \cite{Kolda-Bader:2009} introduced many applications 
of tensor decomposition analysis 
in various fields such as signal processing, computer vision, data mining,
and others.
Tensor decomposition concerns with its rank and approximation of tensor
decomposition concerns with typical ranks.
In this paper we discuss the typical rank for $3$-way arrays ($3$-tensors).
A $3$-way array 
$$(a_{ijk})_{1\leq i\leq m,\ 1\leq j\leq n,\ 1\leq k\leq p}$$
with size $(m,n,p)$ is called an $m\times n\times p$ tensor.
An $m\times n\times p$ tensor of form 
$$(x_iy_jz_k)_{1\leq i\leq m,\ 1\leq j\leq n,\ 1\leq k\leq p}$$
is called a rank one tensor.
A rank of a tensor $T$, denoted by $\rank\, T$, 
is defined as the minimal number of rank one tensors which describe $T$ as a sum.
The rank depends on the base field.  
\par

Throughout this paper, we assume that the base field is the real number field $\RRR$.
Let $\mathbb{R}^{m\times n\times p}$ be the set of $m\times n\times p$ tensors with Euclidean topology.
A number $r$ is a typical rank of $m\times n\times p$ tensors
if the set of tensors with rank $r$ contains 
a nonempty open semi-algebraic set
of $\mathbb{R}^{m\times n\times p}$
 (see \cite{Friedland:2008}).
We denote by $\typicalrankR(m,n,p)$ the set of typical ranks of
$\mathbb{R}^{m\times n\times p}$.
Note that
$$\typicalrankR(m_1,m_2,m_3)=\typicalrankR(m_i,m_j,m_k)$$
for any $i,j,k$ with $\{i,j,k\}=\{1,2,3\}$.
If $s$ and $t$ are typical ranks of tensors with $s\leq t$, then
$u$ is also a typical rank of tensors for any $u$ with $s\leq u\leq t$. 
The minimal number of $\typicalrankR(m,n,p)$ is equal to
the generic rank $\grank(m,n,p)$ of the set of $m\times n\times p$ tensors over the complex number field \cite{Friedland:2008}.
we denote by $\mtrank(m,n,p)$ by the maximal typical rank of $\mathbb{R}^{m\times n\times p}$.  Then
$$\typicalrankR(m,n,p)=[\grank(m,n,p),\mtrank(m,n,p)]\cap \mathbb{Z}.$$
However, it is only known that one or two typical ranks of tensors.

For $m=1$, the rank of $1\times n\times p$ tensor is its matrix rank and therefore the set of typical ranks of $1\times n\times p$ tensors
consists of one number $\min(n,p)$.
In the case where $m=2$, the set of typical ranks of $2\times n\times p$ tensors is well-known \cite{tenBerge-etal:1999}:
$$\typicalrankR(2,n,p)=\begin{cases}
\{p\}, & n<p\leq 2n \\
\{2n\}, & 2n<p \\
\{p,p+1\}, & n=p\geq 2
\end{cases}
$$
Suppose that $3\leq m\leq n$.
The typical rank of $\RRR^{m\times n\times p}$ is quite different
from that of $\RRR^{2\times n\times p}$.
Let $\rho(n)$ be the Hurwitz-Radon number, that is,
$\rho(n)=2^b+8c$ for nonnegative integers $a,b,c$ 
such that $n=(2a+1)2^{b+4c}$ and $0\leq b<4$.
If $p>(m-1)n$ then the set of typical ranks of $m\times n\times p$ tensors
is just $\{\min(p,mn)\}$.
For $p=(m-1)n$, the set of typical ranks of $m\times n\times p$ tensor 
is $\{p\}$ (resp. $\{p,p+1\}$) if and only if $m>\rho(n)$ (resp. $m\leq \rho(n)$).

The purpose of this paper is to give an upper bound of typical ranks
of $m\times n\times ((m-1)n-1)$ tensors.

\begin{thm} \label{thm:main}
Let $3\leq m\leq n$ and $p=(m-1)n-1$.
A typical rank of $m\times n\times p$ tensors is less than or equal to $p+1$.
In particular, $\typicalrankR(m,n,p)$ is a subset of $\{p,p+1\}$.
\end{thm}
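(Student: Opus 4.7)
The goal is to prove that $S := \{T \in \RR^{m\times n\times p} : \rank T \leq p+1\}$ is Euclidean-dense in $\RR^{m\times n\times p}$. This suffices, since density rules out any integer $\geq p+2$ from being a typical rank (combined with $\grank = p$ recalled in the Introduction, the typical ranks must then lie in $\{p,p+1\}$).

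\emph{First, I would show $S$ has nonempty Euclidean interior by a projection argument.} Since $p+1=(m-1)n$ is a typical rank of $m\times n\times(p+1)$ tensors (recalled in the Introduction), there is a Euclidean-open set $\mathcal{O}\subset\RR^{m\times n\times(p+1)}$ of rank-$(p+1)$ tensors. Let $\pi:\RR^{m\times n\times(p+1)}\to\RR^{m\times n\times p}$ denote the linear projection forgetting the $(p+1)$-th frontal slice. A rank-$(p+1)$ decomposition $\widetilde T=\sum_{i=1}^{p+1} u_i\otimes v_i\otimes w_i$ of $\widetilde T\in\mathcal{O}$ restricts, by truncating each $w_i\in\RR^{p+1}$ to its first $p$ entries, to a rank-$\leq p+1$ decomposition of $\pi(\widetilde T)$. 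Since $\pi$ is an open linear surjection, $\pi(\mathcal{O})\subseteq S$ is Euclidean-open.

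\emph{Second, I upgrade openness to density.} When $m>\rho(n)$, the Introduction tells us $(m-1)n$ is the sole typical rank of $m\times n\times(p+1)$ tensors, so $\mathcal{O}$ is Euclidean-dense; thus $\pi(\mathcal{O})$ is Euclidean-dense in $\RR^{m\times n\times p}$ and we are done. When $m\leq\rho(n)$, the rank-$(p+2)$ stratum in $\RR^{m\times n\times(p+1)}$ is also open, so $\mathcal{O}$ itself is not dense and a direct construction is needed. For this case I would view $T$ as a pencil $(T_1,\ldots,T_p)$ of $m\times n$ matrices and seek $A\in\RR^{m\times(p+1)}$, $B\in\RR^{n\times(p+1)}$, $C\in\RR^{p\times(p+1)}$ with $T_k=A\,\diag(C_{k,:})\,B^T$ for each $k$. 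I would build the columns of $B$ from a Hurwitz--Radon orthogonal family on $\RR^n$ (which exists since $m\leq\rho(n)$), reflecting the same structural input that drives the known $p=(m-1)n$ result; the remaining entries of $A$ and $C$ are then determined by a linear system whose coefficients are polynomial in $T$ and whose invertibility is a Zariski-open condition $g(T)\neq 0$. Provided the set $\{T:g(T)\neq 0\}$ is nonempty, it is Zariski-open, hence Euclidean-dense in $\RR^{m\times n\times p}$, and contained in $S$, yielding density.

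\textbf{Main obstacle.} The substantive work lies in the construction for $m\leq\rho(n)$: producing the polynomial $g$ from the Hurwitz--Radon ingredients, and exhibiting an explicit real witness $T_0$ with $g(T_0)\neq 0$ to ensure $g\not\equiv 0$. Without this explicit step, the projection argument alone yields only Euclidean-openness of $S$, not density, and density in the regime $m\leq\rho(n)$ cannot be inferred abstractly because $\mathcal{O}$ fails to be dense in $\RR^{m\times n\times(p+1)}$ there. Verifying the nondegeneracy polynomial $g$ is well-defined and nonzero is where the combinatorial rigidity of the Hurwitz--Radon family is essential.
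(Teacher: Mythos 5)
Your first step --- projecting a dense open set of rank-$(p+1)$ tensors from $\RRR^{m\times n\times(p+1)}$ down to $\RRR^{m\times n\times p}$ --- is correct and does settle the case $m>\rho(n)$, where $(m-1)n$ is the unique typical rank one level up. But you have correctly diagnosed, and then not filled, the essential gap: when $m\leq\rho(n)$ the open rank-$((m-1)n)$ stratum upstairs is not dense, so what must be shown is that the image under the truncation map of the \emph{whole} locus $\{\rank\leq(m-1)n\}$ --- not merely of its interior --- is dense; that is, a generic $T\in\RRR^{m\times n\times p}$ must be shown to admit \emph{some} one-slice extension of rank $\leq(m-1)n$. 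This is exactly Proposition~\ref{prop:coverrank} together with the construction of a section, and it is where all the work of the paper lies. Your sketch for this case does not work as stated: fixing the factor $B$ in advance from a Hurwitz--Radon family and solving for $A$ and $C$ leaves only $(m+p)(p+1)$ parameters, which equals $80<84=mnp$ already for $(m,n)=(3,4)$, so the resulting tensors cannot be dense no matter how $g$ is chosen; moreover the Hurwitz--Radon family is the ingredient that produces the rank-$((m-1)n+1)$ obstruction (absolutely nonsingular tensors), not the rank-$((m-1)n)$ decompositions you need.

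The paper's argument is uniform in $m$ versus $\rho(n)$ and runs differently: permute the tensor to $(X_1;\ldots;X_m)\in\RRR^{n\times p\times m}$, normalize by the invertible $p\times p$ block $A$, and append one column $\ccc$ of small norm to the last slice, producing $(Z_1;\ldots;Z_{m-1})=((X_m-\ccc\bbb^\top)A^{-1},\ccc)$. At $\ccc=\zerovec$ the matrix $Z_{m-1}$ has a zero column, so its characteristic polynomial is divisible by $\lambda$ but not by $\lambda^2$; hence the form $|\sum_k a_kZ_k-a_mE_n|$ changes sign, which places the extended tensor in the open set $\mathfrak{C}$ where \cite{Sumi-etal:2011a} guarantees rank exactly $(m-1)n$, once the genericity conditions \eqref{cd1}--\eqref{cd5} are also arranged (with a separate Jacobian computation to secure the irreducibility condition when $m=3$). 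That sign-change mechanism is what replaces your missing Hurwitz--Radon construction, and it is the step your proposal would need to supply.
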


By \cite[Theorem~1.1]{Miyazaki-etal:2012a} we directly have the following
proposition.

\begin{prop}
Let $m\leq n$.
Suppose that $3\leq m\leq \rho(n)$ or 
that both $m$ and $n$ are congruent to $3$ modulo $4$.
Then 
$$\typicalrankR(m,n,(m-1)n-1)=\{(m-1)n-1,(m-1)n\}.$$
\end{prop}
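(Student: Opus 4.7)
The plan is to combine the main theorem of this paper with the external result [Miyazaki-etal:2012a, Theorem~1.1]. First, I apply Theorem~\ref{thm:main} to obtain the upper bound
$$\typicalrankR(m,n,(m-1)n-1)\subseteq \{(m-1)n-1,(m-1)n\}.$$
Thus the proposition is reduced to verifying that \emph{both} values in this two-element set are actually attained as typical ranks.

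For the smaller value, I use the fact recalled in the introduction that $\min \typicalrankR(m,n,p)$ equals the generic complex rank $\grank(m,n,p)$. For the format $(m,n,(m-1)n-1)$ with $3\le m\le n$, this generic rank is $(m-1)n-1$ (this is precisely the assertion in the abstract that $(m-1)n-1$ is a minimal typical rank). Consequently $(m-1)n-1\in \typicalrankR(m,n,(m-1)n-1)$ with no further work.

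The substantive step is to show $(m-1)n\in \typicalrankR(m,n,(m-1)n-1)$ under the hypotheses $3\le m\le \rho(n)$ or $m\equiv n\equiv 3\pmod 4$. This is where I invoke [Miyazaki-etal:2012a, Theorem~1.1]: that theorem exhibits, under exactly these hypotheses, a nonempty open semi-algebraic subset of $\RRR^{m\times n\times ((m-1)n-1)}$ consisting of tensors whose rank is at least $(m-1)n$. Combining this with the upper bound from Theorem~\ref{thm:main}, every tensor in that open set has rank equal to $(m-1)n$, so $(m-1)n$ is a typical rank. Together with the first step, this gives the claimed equality.

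The main obstacle, and the reason this proposition is stated as a direct corollary rather than proved from scratch here, is the production of an open set of tensors of rank strictly greater than the generic rank. The cited theorem handles this via two separate constructions: for $m\le \rho(n)$ one exploits the Hurwitz--Radon structure (orthogonal multiplication / Radon maps) to build tensors that provably cannot be written as fewer than $(m-1)n$ rank-one summands over $\RRR$; for $m\equiv n\equiv 3\pmod 4$ one uses a congruence/signature argument on an associated quadratic form. These two constructions are the genuinely hard part, and here they are taken as a black box so that the proposition follows immediately from Theorem~\ref{thm:main} together with [Miyazaki-etal:2012a, Theorem~1.1].
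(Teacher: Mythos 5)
Your proposal is correct and follows essentially the same route as the paper, which simply derives the proposition by combining the upper bound of Theorem~\ref{thm:main} with \cite[Theorem~1.1]{Miyazaki-etal:2012a} (the latter supplying, under the stated hypotheses, an open set of tensors of rank at least $(m-1)n$, while the generic rank gives the lower value). Your additional commentary on how the cited theorem is proved is speculative but harmless, since it is used only as a black box exactly as in the paper.
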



\section{A proof}

In this section, we show the proof of Theorem~\ref{thm:main}.

First, we establish terminology.

\begin{notation}
\begin{enumerate}
\item For an $m\times n$ matrix $M$, we denote the $i\times j$ matrix consisting of the first $i$ rows and the first $j$ columns of $M$ by $M^{\leq i}_{\leq j}$.
\item For an $m\times n$ matrix $M$, we denote the $j$-th column vector of $M$ by $M_{=j}$.
\item For a square matrix $M$, we denote the determinant of $M$ by $|M|$.
\item For a tensor $T = (t_{ijk})_{1\leq i\leq m,\ 1\leq j\leq n,\ 1\leq k\leq p}$, we denote it by $(T_1; \ldots;T_p)$,
where $T_k = (t_{ijk})_{1\leq i\leq m,\ 1\leq j\leq n}$ for $k = 1,\ldots,p$ is an $m\times n$ matrix.
\item For a vector $\ccc=(c_1,\ldots,c_n)^\top\in\RRR^n$, we denote the Euclidean norm of $\ccc$ by $||\ccc||$, that is, $||\ccc||=\sqrt{\sum_{k=1}^n c_k^2}$.
\end{enumerate}
\end{notation}

\begin{prop} \label{prop:coverrank}
Let $\mathcal{R}(m,n,p;r):=\{T\in\RRR^{m\times n\times p}\mid \rank\;T\leq r\}$
and $\tau$
be a canonical map from $\RRR^{m\times n\times p}$ onto $\RRR^{m\times n\times (p-1)}$ which sends $(Y_1;\ldots;Y_{p-1};Y_p)$ to $(Y_1;\ldots;Y_{p-1})$.
If the set $\tau(\mathcal{R}(m,n,p;r))$ is a dense subset of $\RRR^{m\times n\times (p-1)}$, then $\mtrank(m,n,p-1)\leq r$.
\end{prop}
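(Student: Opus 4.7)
The plan is quite direct, and splits into two steps. First I would show that the projection $\tau$ cannot increase rank, so
$$\tau(\mathcal{R}(m,n,p;r))\subseteq \mathcal{R}(m,n,p-1;r).$$
Indeed, given $T\in\mathcal{R}(m,n,p;r)$, fix a decomposition $T=\sum_{i=1}^{r}\xxx_i\otimes\yyy_i\otimes\zzz_i$ with $\zzz_i\in\RRR^p$, and let $\zzz_i'\in\RRR^{p-1}$ be the vector obtained by dropping the last coordinate of $\zzz_i$. By the multilinearity of the outer product, $\tau(T)=\sum_{i=1}^{r}\xxx_i\otimes\yyy_i\otimes\zzz_i'$, so $\tau(T)$ has rank at most $r$. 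Consequently the density of $\tau(\mathcal{R}(m,n,p;r))$ in $\RRR^{m\times n\times(p-1)}$ assumed in the hypothesis forces $\mathcal{R}(m,n,p-1;r)$ itself to be dense in $\RRR^{m\times n\times(p-1)}$.

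Next I would use the topological definition of typical rank to finish. Suppose for contradiction that $\mtrank(m,n,p-1)>r$. Then some integer $u>r$ lies in $\typicalrankR(m,n,p-1)$, so the set of $m\times n\times (p-1)$ tensors of rank exactly $u$ contains a nonempty open (semi-algebraic) set $V\subseteq\RRR^{m\times n\times(p-1)}$. By the density just established, $V$ must meet $\mathcal{R}(m,n,p-1;r)$; but any tensor in this intersection has rank at most $r<u$, contradicting the fact that every tensor in $V$ has rank exactly $u$. Hence no typical rank can exceed $r$, which is precisely $\mtrank(m,n,p-1)\leq r$.

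The argument is essentially a bookkeeping step bridging the topological definition of typical rank (existence of a nonempty open set on which the rank is constant) and the monotonicity of rank under third-mode truncation. The only point that deserves a moment of care is the first step, where one must verify that truncating the third-mode factor vectors genuinely realises $\tau$ on a rank-one decomposition; this is immediate from multilinearity and does not require any nontrivial inequality. Nothing here looks technically hard, so I do not anticipate a real obstacle beyond making the two observations cleanly.
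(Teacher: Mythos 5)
Your proof is correct, and it arrives at the conclusion by a slightly more elementary route than the paper's. The paper's own proof is a two-line appeal to semi-algebraic geometry: since $\tau$ is a linear map, $\tau(\mathcal{R}(m,n,p;r))$ is semi-algebraic, a dense semi-algebraic set has dense interior, so it contains an open dense (semi-algebraic) set, and the bound "follows" --- with the crucial inclusion $\tau(\mathcal{R}(m,n,p;r))\subseteq\mathcal{R}(m,n,p-1;r)$ left entirely implicit. You make that monotonicity explicit by truncating the third-mode factor vectors of a rank decomposition, which is exactly the right justification, and you then observe that bare density already suffices: the definition of typical rank hands you a nonempty open set $V$ on which the rank is constantly $u$, and a dense set must meet $V$, so $u\leq r$. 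You never need to pass to the interior of the image or invoke its semi-algebraicity. Your version is thus both more complete (it supplies the step the paper omits) and lighter on machinery; the only thing the paper's phrasing buys is the extra observation that the image is itself semi-algebraic with dense interior, which is in the spirit of the genericity arguments elsewhere in the paper but is not needed to prove this proposition.
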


\begin{proof}
Since $\tau(\mathcal{R}(m,n,p;r))$ is a dense, semi-algebraic set, 
its interior is an open, dense semi-algebraic set and thus a Zariski open set.
Therefore, $\mtrank(m,n,p-1)\leq r$ follows.
\qed
\end{proof}

Let $3\leq m\leq n$, $p_0=(m-1)n$, and $p=p_0-1$.
We want to show that $\mtrank(m,n,p)\leq p_0$.
To do this, we show that there are a dense subset $U$ of 
$\RRR^{m\times n\times p}$ and a section $s\colon U
\to \RRR^{m\times n\times p_0}$ such that $\rank\,s(T)\leq p_0$ for any
tensor $T$ of $U$.
Then, by Proposition~\ref{prop:coverrank}, we conclude that 
$\mtrank(m,n,p)\leq p_0$.

Let $W$ be the set consisting of $\begin{pmatrix} A_1&A_2\\ A_3&A_4\end{pmatrix}$
such that $A_1$ is an $(n-1)\times (n-1)$ matrix and has distinct eigenvalues,
$P$ is a nonsingular matrix so that $P^{-1}A_1P$ is a diagonal matrix,
and each element of $P^{-1}A_2$ is nonzero.

Let $\fl_1\colon \RRR^{m_1\times m_2\times m_3} \to \RRR^{m_1m_3\times m_2}$
be a bijection defined as
$$(A_1;A_2;\ldots;A_{m_3}) \mapsto \begin{pmatrix} A_1\\ A_2\\ \vdots\\
A_{m_3}\end{pmatrix}$$
and $\fl_2\colon \RRR^{m_1\times m_2\times m_3} \to \RRR^{m_1\times m_2m_3}$
be a bijection defined as
$$(A_1;A_2;\ldots;A_{m_3}) \mapsto (A_1,A_2,\ldots,A_{m_3}).$$

For $(X_1;\ldots;X_m)\in\RRR^{n\times p\times m}$, we put
$\begin{pmatrix} A\\ \bbb^\top\end{pmatrix}=\begin{pmatrix} X_1\\ \vdots\\ X_{m-1}\end{pmatrix}$,
and 
$$(Z_1;\ldots;Z_{m-1})=\fl_2^{-1}(X_mA^{-1},\zerovec)\in\RRR^{n\times p_0\times (m-1)}.$$
Now suppose that $m>3$.
Let $\mathfrak{T}$ be the subset of $\RRR^{n\times p\times m}$ consisting of
$(X_1;\ldots;X_m)$ satisfying the following conditions:
\begin{eqnarray}
|A|\ne 0. \label{cd1} \\ 
|(Z_{m-1})^{\leq n-1}_{\leq n-1}|\ne 0. \label{cd2} \\
\text{All eigenvalues of $(Z_{m-1})^{\leq n-1}_{\leq n-1}$ are distinct.} \label{cd3} \\
Z_k \in W \text{ for } 1\leq k\leq m-2. \label{cd4} \\
\left|\sum_{k=1}^{m-2} x_kZ_k-x_mE_n\right| \text{ is irreducible.} \label{cd5} 
\end{eqnarray}

If $\left|\sum_{k=1}^{m-2} x_kZ_k-x_mE_n\right|$ is irreducible, then so is
$\left|\sum_{k=1}^{m-1} x_kZ_k-x_mE_n\right|$ for any $Z_{m-1}$.
Since $m>3$, the set $\mathfrak{T}$ is a nonempty Zariski open set.


We consider the following two maps:
$$
\begin{array}{ll}
f\colon \mathfrak{V}_1 \to \RRR^{n\times n\times (m-1)}; & 
f(Y_1;\ldots;Y_m)=Y_m(\fl_1(Y_1;\ldots;Y_{m-1}))^{-1},\\[1mm]
g\colon \RRR^{n\times p\times m} \to \RRR^{n\times p_0\times m}; &  
g(X_1;\ldots;X_m)=\fl_1^{-1}\begin{pmatrix} \begin{matrix} A\\ \bbb^\top\end{matrix} & \begin{matrix} \zerovec\\ 1\end{matrix} \\
X_m & \zerovec\end{pmatrix},
\end{array}
$$
where $$\mathfrak{V}_1:=\{(Y_1;\ldots;Y_m)\in \RRR^{n\times p_0\times m} \mid |\fl_1(Y_1;\ldots;Y_{m-1})|\ne 0\}.$$
Then
$$f\circ g(X_1;\ldots;X_m)=(X_mA^{-1},\zerovec)$$
and more generally 
$$f(\fl_1^{-1}\begin{pmatrix} \begin{matrix} A\\ \bbb^\top\end{matrix} & \begin{matrix} \zerovec \\ 1\end{matrix}
\\ X_m & \ccc \end{pmatrix})
=((X_m-\ccc \bbb^\top)A^{-1},\ccc)$$
for $(X_1;\ldots;X_m)\in g^{-1}(\mathfrak{V}_1)$.

Now, we fix $(X_1;\ldots;X_{m-1})\in \mathfrak{T}$.
Putting $(Z_1;\ldots;Z_{m-1})=(X_mA^{-1},\zerovec)$,
conditions \eqref{cd2}--\eqref{cd5} hold.
Since the characteristic polynomial $|Z_{m-1}-\lambda E_n|$ is divisible by $\lambda$ but not by $\lambda^2$,
we have $(Z_1;\ldots;Z_{m-1})\in\mathfrak{C}$,
where
$$\mathfrak{C}=\{(Y_1;\ldots;Y_m) \in \mathbb{R}^{n\times n\times(m-1)}\mid\;
|\sum_{k=1}^{m-1} a_kY_k-a_mE_n|<0
\text{ for some $(a_1,\ldots,a_m)^\top\in \mathbb{R}^m$}\}.$$
Therefore, $f\circ g(\mathfrak{T})\subset \mathfrak{C}$.
In the previous paper \cite{Sumi-etal:2011a}, 
we showed that $\rank\,X=p_0$ for any $X\in\mathfrak{V}_1$ with
$f(X)\in \mathfrak{C}\cap\mathfrak{W}_2$,
where
\begin{equation*}
\begin{split}
\mathfrak{W}_2:=\{Z=(Z_1&;\ldots;Z_{m-1}) \in \RRR^{n\times n\times(m-1)}\mid \\
&  Z_k\in W \text{ for each $1\leq k\leq m-1$},\ \text{$|\sum_{k=1}^{m-1} x_kZ_k-x_mE_n|$ is irreducible.}\}.
\end{split}
\end{equation*}

For any $\ccc\in\RRR^n$ with sufficiently small $||\ccc||$ and 
$(Z_1;\ldots;Z_{m-1})=((X_m-\ccc \bbb^\top)A^{-1},\ccc)$,
the conditions \eqref{cd1}--\eqref{cd5} hold.
In addition, since $\mathfrak{C}$ is open, there exists $\ccc$ such that
$Z_{m-1}\in W$ and $(Z_1;\ldots;Z_{m-1})\in \mathfrak{C}\cap\mathfrak{W}_2$.
Therefore, we have
$$\rank (X_1;\ldots;X_{m-1})\leq \rank\; \fl_1^{-1}(\begin{pmatrix} \begin{matrix} A\\ \bbb^\top\end{matrix} & \begin{matrix} \zerovec \\ 1\end{matrix}
\\ X_m & \ccc \end{pmatrix})=p_0.$$
We complete the proof of the following theorem.

\begin{thm} \label{thm:m>3}
Let $4\leq m\leq n$.
$$\typicalrankR(m,n,(m-1)n-1)=\{(m-1)n-1\} \text{ or } \{(m-1)n-1,(m-1)n\}.$$
\end{thm}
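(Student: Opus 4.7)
The plan is to apply Proposition~\ref{prop:coverrank} with $r = p_0 := (m-1)n$: if the projection of the rank-$\leq p_0$ locus in $\mathbb{R}^{m\times n\times p_0}$ onto $\mathbb{R}^{m\times n\times p}$ is dense, then $\mtrank(m,n,p) \leq p_0$. Combined with the assertion from the introduction that $p = (m-1)n-1$ is already a typical rank (so it is the minimum element of $\typicalrankR(m,n,p)$) and the interval property of $\typicalrankR$, this single upper bound immediately yields the dichotomy $\typicalrankR(m,n,p) = \{p\}$ or $\{p, p_0\}$ asserted in the theorem.

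Using the permutation-invariance of $\typicalrankR$, I work in $\mathbb{R}^{n\times p\times m}$ and exhibit a Zariski-dense subset on which rank-$\leq p_0$ lifts exist. The natural candidate is $\mathfrak{T}$, the Zariski-open set cut out by conditions \eqref{cd1}--\eqref{cd5}; here the hypothesis $m \geq 4$ is essential, since it makes \eqref{cd5} a genuine constraint on a pencil in the at least two variables $x_1,\ldots,x_{m-2}$ (together with $x_m$) and hence possible to satisfy. For $(X_1;\ldots;X_m) \in \mathfrak{T}$, I construct a $p_0$-slab extension by applying $g$, but with the trailing zero block replaced by a small vector $\ccc \in \mathbb{R}^n$; the formula for $f$ then controls the associated slab tuple as $((X_m - \ccc\bbb^\top)A^{-1}, \ccc)$, a continuous perturbation of $(X_m A^{-1}, \zerovec)$.

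The main obstacle is to realize this slab tuple inside $\mathfrak{C} \cap \mathfrak{W}_2$, for then the earlier result of \cite{Sumi-etal:2011a} forces the extended tensor to have rank exactly $p_0$. Conditions \eqref{cd2}--\eqref{cd5} already place the unperturbed tuple $(X_m A^{-1}, \zerovec)$ in $\mathfrak{C}$ via the observation that the characteristic polynomial of the zero slab is divisible by $\lambda$ but not $\lambda^2$, so a one-parameter deformation produces a point of negative determinant. Since $\mathfrak{C}$ is open, this containment persists for $||\ccc||$ small. The conditions defining $\mathfrak{W}_2$ are Zariski-open: condition \eqref{cd4} places $Z_1,\ldots,Z_{m-2}$ in $W$ independently of $\ccc$, and \eqref{cd5} propagates irreducibility of the generic pencil determinant to any sufficiently small perturbation of the last slab. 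Intersecting these nonempty open conditions with the neighborhood of $\zerovec$ on which $\mathfrak{C}$-containment holds yields the desired $\ccc$.

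Assembling the pieces, for each $(X_1;\ldots;X_m)$ in the Zariski-dense set $\mathfrak{T}$ a suitable $\ccc$ produces a rank-$p_0$ extension, so the density hypothesis of Proposition~\ref{prop:coverrank} is verified and the upper bound $\mtrank(m,n,p) \leq p_0$ follows. The dichotomy observation at the start of the plan then completes the proof.
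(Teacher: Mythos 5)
Your proposal follows essentially the same route as the paper's own argument: the reduction via Proposition~\ref{prop:coverrank}, the Zariski-open set $\mathfrak{T}$ cut out by conditions \eqref{cd1}--\eqref{cd5}, the maps $f$ and $g$, perturbation of the trailing zero block by a small $\ccc$, and the rank-$p_0$ criterion $f(X)\in\mathfrak{C}\cap\mathfrak{W}_2$ from \cite{Sumi-etal:2011a}. The only slight imprecision is your claim that $Z_1,\ldots,Z_{m-2}$ lie in $W$ ``independently of $\ccc$'' --- they are slabs of $(X_m-\ccc\bbb^\top)A^{-1}$ and so do depend on $\ccc$ --- but since $W$ is open this membership persists for small $||\ccc||$, which is exactly how the paper handles it.
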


In the case when $m=3$, the condition \eqref{cd5} must be replaced as
the condition that 
\begin{equation} \label{eq:char.poly.}
\left|\sum_{k=1}^{m-1} x_kZ_k-x_mE_n\right| \text{ is irreducible.}
\end{equation}
We show that the replacement is possible.

\begin{thm} \label{thm:m=3}
Let $3=m\leq n$.
$$\typicalrankR(m,n,(m-1)n-1)=\{(m-1)n-1\} \text{ or } \{(m-1)n-1,(m-1)n\}.$$
\end{thm}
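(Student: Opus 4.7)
The strategy is to follow the $m>3$ argument of Theorem~\ref{thm:m>3} and modify only the definition of $\mathfrak{T}$. When $m=3$, the polynomial appearing in \eqref{cd5} degenerates to $|x_1Z_1-x_3E_n|$, which is (up to normalisation) the characteristic polynomial of $Z_1$ in the variable $x_3/x_1$ and thus always factors as a product of $n$ linear forms over $\CCC$; hence \eqref{cd5} can never hold and must be replaced. I keep conditions \eqref{cd1}--\eqref{cd4} and substitute \eqref{eq:char.poly.}, which requires that the full pencil $|x_1Z_1+x_2Z_2-x_3E_n|$ be irreducible.

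Zariski openness of the new $\mathfrak{T}$ is straightforward: the reducible locus in the space of ternary forms of degree $n$ is the union of the images of the multiplication maps from pairs of lower-degree form spaces, and by projective properness each such image is Zariski closed; its preimage under the algebraic map sending $(X_1;X_2;X_3)$ to the coefficients of $|x_1Z_1+x_2Z_2-x_3E_n|$ is therefore also Zariski closed, and the openness of \eqref{cd1}--\eqref{cd4} is already known from the $m>3$ case.

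The main obstacle is to verify that $\mathfrak{T}$ is nonempty, since the vanishing of the last column of $Z_2=Z_{m-1}$ (forced by $(Z_1;Z_2)=\fl_2^{-1}(X_3A^{-1},\zerovec)$) cuts out a positive-codimension slice and one must check that irreducibility can still be achieved on it. I plan to exhibit a witness explicitly: take $Z_2=\diag(d_1,\ldots,d_{n-1},0)$ with $d_1,\ldots,d_{n-1}$ distinct nonzero reals, so that its last column vanishes and its top-left $(n-1)\times(n-1)$ block satisfies \eqref{cd2}--\eqref{cd3}. Then choose $Z_1\in W$ such that $|x_1Z_1+x_2Z_2-x_3E_n|$ is irreducible; for this fixed $Z_2$ the reducibility locus is a proper Zariski closed subset of $\RRR^{n\times n}$, so a Zariski dense set of $Z_1\in W$ does the job (its non-emptiness can also be confirmed by a direct check on an explicit $Z_1$, e.g.\ a small perturbation of a companion matrix). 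Realising this $(Z_1,Z_2)$ by a triple $(X_1;X_2;X_3)$ is then straightforward: pick $X_1,X_2$ generically so that $A$ is nonsingular and set $X_3=(Z_1,Z_2')A$, where $Z_2'$ denotes $Z_2$ with its last (zero) column removed.

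Once $\mathfrak{T}$ is shown to be a nonempty Zariski open set, the remainder of the proof repeats the $m>3$ argument verbatim. For $(X_1;X_2;X_3)\in\mathfrak{T}$ and any $\ccc\in\RRR^n$ with $||\ccc||$ sufficiently small, the perturbed $(Z_1(\ccc);Z_2(\ccc))=((X_3-\ccc\bbb^\top)A^{-1},\ccc)$ still satisfies \eqref{cd1}--\eqref{cd4} and the new \eqref{eq:char.poly.} by continuity and openness; for generic such $\ccc$, one moreover has $Z_2(\ccc)\in W$, so $(Z_1(\ccc);Z_2(\ccc))\in\mathfrak{C}\cap\mathfrak{W}_2$. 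Invoking the rank bound from \cite{Sumi-etal:2011a} together with Proposition~\ref{prop:coverrank} yields $\mtrank(3,n,2n-1)\leq 2n$, and combined with the known lower bound $2n-1$ this gives the stated dichotomy.
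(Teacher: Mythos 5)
Your overall architecture is the same as the paper's: keep \eqref{cd1}--\eqref{cd4}, replace \eqref{cd5} by \eqref{eq:char.poly.}, check that the resulting set is a nonempty Zariski open set, and then rerun the $m>3$ argument verbatim. You also correctly locate the one nontrivial point: because $(Z_{m-1})_{=n}=\zerovec$ is forced, irreducibility of $|x_1Z_1+x_2Z_2-x_3E_n|$ must be verified on a positive-codimension slice, so genericity in the ambient space of pairs proves nothing.

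But that is exactly where your proof stops short. The assertion that for the fixed $Z_2=\diag(d_1,\ldots,d_{n-1},0)$ the reducibility locus is a \emph{proper} Zariski closed subset of $\RRR^{n\times n}$ is precisely the statement that a witness exists, and you never produce one: the parenthetical ``can also be confirmed by a direct check on an explicit $Z_1$, e.g.\ a small perturbation of a companion matrix'' is a deferral, not a proof. Verifying irreducibility of the degree-$n$ ternary form attached to an explicit $n\times n$ pencil, uniformly in $n$, is not a routine check; it is the entire technical content of the paper's proof of this theorem. The paper handles it by putting $Z_2$ into a companion-type normal form $Y_2$, restricting $Y_1$ to a subspace $S$ of dimension $n(n+3)/2-1$ equal to that of the target space $P$ of coefficients, and proving by induction on $n$ (a page of determinant computations) that the Jacobian of $(Y_1;Y_2)\mapsto|x_1Y_1+x_2Y_2+x_3E_n|-x_3^n$ is nonzero; dominance of this map then guarantees that its image meets the nonempty open locus of irreducible forms. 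Your witness strategy could in principle replace that computation, but only if you actually exhibit $Z_1$ and prove irreducibility for all $n$ (by an induction or specialization argument of comparable substance). As written, the key step is asserted rather than proved.
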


\begin{proof}
Note that the $n$-th column $(Z_{m-1})_{=n}$ of $Z_{m-1}$ is zero.
Then $H^{-1}Z_{m-1}H$ is equal to
$$Y_2=
\begin{pmatrix}0 & 0&\cdots & 0\\
-1 & 0 & \cdots & v_{2}\\
\vdots & \ddots & \ddots & \vdots \\
0& \cdots & -1 & v_n
\end{pmatrix}
\}$$
for some nonsingular matrix $H$.
Let $P$ be a real vector space of dimension $n(n+3)/2-1$ with basis
$$\{x_1^ax_2^bx_3^c\mid 0\leq a,b,c\leq n, a+b+c=n, b,c\ne n\}$$
and $S$ be the set defined as
$$S:=\{(Y_1;Y_2) \in \RRR^{n\times n\times 2} \mid
Y_1=
\begin{pmatrix} u_{11} & 0&\cdots & u_{11}\\
u_{21} & u_{22} & \ddots & \vdots \\
\vdots & \ddots & \ddots & 0\\
u_{n1}& \cdots & u_{n-1,1} & u_{n1}
\end{pmatrix}
\}$$
which is isomorphic to a vector space of dimension $n(n+3)/2-1$.
Let $g$ be a map from $S$ to $P$
defined as
$$g((Y_1;Y_2))=|x_1Y_1+x_2Y_2+x_3E_n|-x_3^n.$$
Note that the polynomial \eqref{eq:char.poly.} is irreducible if and only if $G((Z_1;Z_2))$
is irreducible.
Now we show that the Jacobian of $G$ is nonzero.
\par
Suppose that for constants $c(v_j)$, $c(u_{ij})$, the linear equation
\begin{equation}\label{eqn:1}
\sum_{j=2}^n c(v_j)\frac{\partial g}{\partial v_j}
+\sum_{1\leq j\leq i\leq n} c(u_{ij})\frac{\partial g}{\partial u_{ij}}=0
\end{equation}
holds.
We show that all of $c(v_j)$, $c(u_{ij})$ are zero by induction on $n$.
It is easy to see that the assertion holds in the case where $n=1$.
As the induction assumption, we assume that the
assertion holds in the case where $n-1$ instead of $n$.
We put
$$\lambda_j=u_{jj}x_1+x_3 \text{ and  } \mu(a,b)=\prod_{t=a}^{b}\lambda_t.$$
After a partial derivation, we put $u_{ij}=0$ ($i>j$) and then
have the following equations:

$$
\begin{array}{lcll}
\displaystyle\frac{\partial g}{\partial v_j}&=& 
x_2^{n-j+1}\mu(1,j-1)
& (2\leq j\leq n)\\ 
\displaystyle\frac{\partial g}{\partial u_{11}}&=& 
x_1
 \left| \begin{matrix} \lambda_{2} && & & (v_{2}+1)x_2\\
-x_2& \lambda_{3} &&& v_{3}x_2\\
& \ddots& \ddots&&\vdots \\
& & -x_2 & \lambda_{n-1}& v_{n-1}x_2\\
& && -x_2 & \lambda_{n}+v_nx_2\\
\end{matrix}\right|
& \\ 
\displaystyle\frac{\partial g}{\partial u_{jj}}&=& 
x_1\mu(1,j-1)
 \left| \begin{matrix} \lambda_{j+1} && & & v_{j+1}x_2\\
-x_2& \lambda_{j+2} &&& v_{j+2}x_2\\
& \ddots& \ddots&&\vdots \\
& & -x_2 & \lambda_{n-1}& v_{n-1}x_2\\
& && -x_2 & \lambda_{n}+v_nx_2\\
\end{matrix}\right|
& (2\leq j\leq n)\\ 
\displaystyle\frac{\partial g}{\partial u_{ij}}&=& 
\displaystyle -x_1x_2^{n-i}\mu(j+1,i-1)
\left| \begin{matrix} \lambda_{1} & & && u_{11}x_1\\
-x_2& \lambda_{2} &&& v_{2}x_2\\
& \ddots& \ddots&& \vdots \\
& & -x_2 & \lambda_{j-1}& v_{j-1}x_2\\
& && -x_2 & v_jx_2\\
\end{matrix}\right|
& (1\leq j<i\leq n) \\ 
\end{array}
$$

\noindent
By seeing terms divisible by $\lambda_1$ in the left hand side of
\eqref{eqn:1}, we have
\begin{equation*}\label{eqn:3}
\sum_{j=2}^n c(v_j)\frac{\partial g}{\partial v_j}
+\sum_{2\leq j\leq i\leq n} c(u_{ij})h_{ij}=0
\end{equation*}
where 
$$h_{ij}=
-x_1x_2^{n-i}\mu(j+1,i-1)
\left| \begin{matrix} \lambda_{1} & & && 0\\
-x_2& \lambda_{2} &&& v_{2}x_2\\
& \ddots& \ddots&& \vdots \\
& & -x_2 & \lambda_{j-1}& v_{j-1}x_2\\
& && -x_2 & v_jx_2\\
\end{matrix}\right|$$
Note that
$$
\begin{array}{lcll}
\displaystyle\frac{\partial g}{\partial v_j} &=& 
\displaystyle\lambda_1\frac{\partial g^\prime}{\partial v_j} 
& (2\leq j\leq n), \text{ and} \\
\displaystyle\frac{\partial g}{\partial u_{ij}} &=& 
\displaystyle\lambda_1\frac{\partial g^\prime}{\partial u_{ij}} 
& (2\leq j\leq i\leq n) \\
\end{array}
$$
where $g^\prime$ is the determinant of 
the $(n-1)\times (n-1)$ matrix obtained from 
$x_1Y_1+x_2Y_2+x_3E_n$ by removing the first row and the first column
minus $x_3^{n-1}$.
Therefore we have
$$c(v_j)=c(u_{ij})=0 \quad (2\leq j\leq i\leq n)$$ 
since
$\displaystyle\frac{\partial g^\prime}{\partial v_j}$, 
$\displaystyle\frac{\partial g^\prime}{\partial u_{ij}}$ 
($2\leq j\leq i\leq n$) 
are linearly independent by \cite[Lemma 5.2]{Sumi-etal:2011a}.
By \eqref{eqn:1}, we have
\begin{equation}\label{eqn:4}
c(u_{11})\frac{\partial g}{\partial u_{11}}
-\sum_{i=2}^n c(u_{i1})\displaystyle u_{11}x_1^2x_2^{n-i}\mu(2,i-1)=0.
\end{equation}
By expanding at the $n$-th column, we have
$$
\frac{\partial g}{\partial u_{11}}=(v_2+1)x_1x_2^{n-1}+\sum_{i=3}^{n-1} v_{i}x_1x_2^{n-i+1}
\mu(2,i-1)+ x_1(\lambda_n+v_nx_2)\mu(2,n-1)
$$
and then the equation \eqref{eqn:4} implies that
\begin{equation*}
\begin{split}
(c(u_{11})(v_2+1)x_2-&c(u_{21})x_1)x_1x_2^{n-2}+
\sum_{i=3}^{n} (c(u_{11})v_ix_2-c(u_{i1})x_1)x_1x_2^{n-i}\mu(2,i-1) \\
&+(c(u_{11})(\lambda_n+v_nx_2)-c(u_{n1})u_{11}x_1)x_1\mu(2,n-1)=0.
\end{split}
\end{equation*}
By seeing the coefficient of $x_1x_2^{n-1}$, we have $c(u_{11})=0$.
Further, by seeing the coefficients corresponding
to $x_3^s$, $0\leq s\leq n-2$ in the equation \eqref{eqn:4}, 
we have $c(u_{i1})=0$ for $2\leq i\leq n$.
Therefore, we conclude that 
$\displaystyle\frac{\partial g}{\partial v_j}$, $\displaystyle\frac{\partial g}{\partial u_{ij}}$ are linearly independent, which
means that the Jacobian of $g$ is nonzero.

Therefore the set of $(Z_1,\ldots,Z_{m-1})\in \RRR^{n\times p}$
such that
$(Z_{m-1})_{=n}=\zerovec$ and
the polynomial \eqref{eq:char.poly.} is irreducible
is a Zariski open set.
Hence, the condition \eqref{cd5} is replaced with \eqref{eq:char.poly.}.
\qed
\end{proof}

\end{document}